\documentclass[12pt,leqno]{amsart}
\usepackage[all]{xy}
\usepackage{amsfonts,amsmath,oldgerm,amssymb,amsthm}
\usepackage{amssymb,amscd, lastpage, array, fancyheadings} 
\usepackage{graphicx}
\usepackage{setspace}
\doublespace

\UseComputerModernTips
\usepackage{times}

\newcommand{\lf}{\lfloor}
\newcommand{\rf}{\rfloor}

\newtheorem{theorem}{Theorem}

\newtheorem{lemma}[subsection]{{\bf Lemma}}
\newtheorem{coro}[subsection]{{\bf Corollary}}
\newtheorem{conj}[subsection]{{\bf Conjecture}}

\newcommand{\al}{\alpha}
\newcommand{\be}{\beta}
\newcommand{\ga}{\gamma}
\newcommand{\la}{\lambda}
\newcommand{\La}{\Lambda}
\newcommand{\om}{\omega}
\newcommand{\del}{\delta}
\newcommand{\es}{\epsilon}

\newcommand{\Z}{\mbox{$\mathbb Z$}}

\newcommand{\R}{\mbox{$\mathbb R$}}     


\begin{document}
\title{Grimm's Conjecture and Smooth Numbers}
\author{Shanta Laishram}
\address{Stat-Math Unit\\
Indian Statistical Institute, New Delhi, India}
\email{shanta@isid.ac.in}
\author{M. Ram Murty}
\address{Department of Mathematics\\
Queen's University, Kingston, On, Canada}
\email{murty@mast.queensu.edu}

\subjclass[2010]{11N36 (primary),  11N25 (secondary)}
\keywords{Grimm's conjecture, smooth numbers, Selberg's sieve.}
\thanks{Research of the second author partially supported by an NSERC Discovery grant.}

\maketitle

\pagenumbering{arabic}
\pagestyle{headings}
\begin{abstract}
Let $g(n)$ be the largest positive integer $k$ such that
there are distinct primes $p_i$ for $1\leq i\leq k$ so that
$p_i |n+i$.  This function is related to a celebrated conjecture
of C.A. Grimm.  We establish upper and lower bounds for $g(n)$ by
relating its study to the distribution of smooth numbers. Standard conjectures
concerning smooth numbers in short intervals imply $g(n) =O(n^\epsilon)$
for any $\epsilon >0$. We also prove unconditionally that $g(n) =O(n^\al)$
with $0.45<\al <0.46$. The study of $g(n)$ and cognate
functions has some interesting implications for gaps between consecutive primes.
\end{abstract}

\section{Introduction}
In 1969, C.A. Grimm \cite{grim} proposed a seemingly innocent conjecture
regarding prime factors of consecutive composite numbers.  We begin by
stating this conjecture.
\par
Let $n\ge 1$ and $k\ge 1$ be integers.
\emph{Suppose $n+1, \cdots , n+k$ are all composite numbers.  Then there
are distinct primes $P_i$ such that $P_i|(n+i)$ for $1\le i\le k$.}
That this is a difficult conjecture having several interesting consequences
was first pointed out by Erd\"os and Selfridge \cite{erd}.
For example, the conjecture implies there is a
prime between two consecutive square numbers,  something
which is out of bounds for even the Riemann hypothesis.
In this paper, we will pursue this theme.  We will relate several
results and conjectures regarding smooth numbers (defined below) to Grimm's conjecture.
\par
To begin,
we say that Grimm's conjecture
holds for $n$ and $k$
if there are distinct primes $P_i$ such that
$P_i|(n+i)$ for $1\le i\le k$ whenever $n+1, \ldots , n+k$ are all
composites.
For positive integers $n>1$ and $k$, we say that \emph{$(n, k)$ has a prime representation
if there are distinct primes $P_1, P_2, \ldots , P_k$ with $P_j|(n+j)$, $1\le j\le k$.}
We define $g(n)$ to be the maximum positive integer $k$ such that $(n, k)$ has a
prime representation. It is an interesting problem to find the best possible
upper bounds and lower bounds for $g(n)$.  If $n'$ is the smallest prime
greater than $n$, Grimm's conjecture would imply that $g(n)> n' -n$.
On the other hand, it is clear that $g(2^m) < 2^m$ for $m >3$.
\par
The question of obtaining lower bounds for $g(n)$ was attacked using methods
from transcendental number theory by
Ramachandra, Shorey and Tijdeman \cite{rst75} who derived
$$g(n)\ge c\left(\frac{\log n}{\log \log n}\right)^3$$ for $n>3$ and an absolute
constant $c>0$.
In other words, for any sufficiently large natural number $n$,
$(n,k)$ has a prime representation if $k \ll (\log n /\log \log n)^3$.

We prove:

\begin{theorem}\label{gnbd}\hfill

\begin{itemize}
\item[$(i)$] There exists an $\al <\frac{1}{2}$ such that $g(n)<n^\al$ for sufficiently large $n$.
\item[$(ii)$] For $\es>0$, we have $|\{n\leq X: g(n)\ge n^\es\}|\ll X{\rm exp}(-(\log X)^{\frac{1}{3}-\es})$
where the implied constant depends only on $\es$.
\end{itemize}
\end{theorem}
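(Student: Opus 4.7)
The combinatorial key observation is that if $(n,k)$ has a prime representation, then for every real $y \ge 2$ the number of $y$-smooth integers in the interval $(n, n+k]$ cannot exceed $\pi(y)$: each such $n+j$ must be assigned a distinct prime $P_j \le y$, and there are only $\pi(y)$ candidates for these $P_j$. Writing $\Psi(x,y)$ for the count of $y$-smooth integers up to $x$, the implication $g(n) < k$ thus follows from producing any $y$ with
\[ \Psi(n+k, y) - \Psi(n, y) > \pi(y). \]
Both parts of the theorem are built on this single observation, applied to different regimes of $(k,y)$.

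For part $(i)$, I would take $y = n^\al$ and $k = \lfloor n^\al \rfloor$ with $\al$ slightly less than $\tfrac12$. By the prime number theorem $\pi(n^\al) \sim n^\al/(\al \log n)$, while by Dickman's theorem the global density of $n^\al$-smooth integers near $n$ is the positive constant $\rho(1/\al)$, so heuristically $(n, n+n^\al]$ contains $(\rho(1/\al)+o(1))n^\al$ smooth integers, dwarfing $\pi(n^\al)$ by a factor of $\log n$. The substantive content of part $(i)$ is therefore an unconditional lower bound for smooth integers in short intervals of length $n^{1/2-\del}$ for some $\del>0$; I would appeal to one of the deep short-interval smoothness results of Friedlander--Granville, Balog, or Harman, and then optimize $\al$ to land in the range $(0.45, 0.46)$ advertised in the abstract.

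For part $(ii)$, the same pigeonhole observation converts the hypothesis $g(n) \ge n^\es$ into $\Psi(n+n^\es,n^\es) - \Psi(n,n^\es) \le \pi(n^\es)$, so $(n, n + n^\es]$ contains far fewer than the expected $\rho(1/\es) n^\es$ many $n^\es$-smooth integers and $n$ is ``smooth-deficient'' in a quantitative sense. I would then bound the cardinality of the set of such $n \le X$ by a variance-style estimate for $\Psi(\cdot, n^\es)$ in short intervals. Selberg's sieve (mentioned in the keywords) enters as the tool providing the pointwise upper bounds on smooth-number counts needed to run this argument, and the exponential saving $\exp(-(\log X)^{1/3-\es})$ has the characteristic shape of a Hildebrand--Tenenbaum saddle-point bound for $\Psi$.

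The main obstacle is part $(i)$: obtaining $\al < 1/2$ requires pushing unconditional smooth-integer results below the $x^{1/2}$ interval-length barrier, where every available technique is technically delicate and sensitive to the choice of smoothness threshold. Part $(ii)$ should then follow more routinely, by marrying a standard mean-value estimate for $\Psi$ with the Selberg sieve upper bound to produce an exceptional-set estimate of the stated shape.
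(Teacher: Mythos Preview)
Your proposal is correct and matches the paper's approach almost exactly: the paper's Lemma~2.4 is precisely your key pigeonhole observation, and both parts are then reduced to smooth-number input. The only differences are bibliographic: for both $(i)$ and $(ii)$ the paper quotes ready-made results of Friedlander--Lagarias (with Harman supplying the refinement $0.45<\al<0.46$), rather than sketching a variance argument; in particular the exceptional-set bound $X\exp(-(\log X)^{1/3-\es})$ is taken as a black box from \cite{frla}, and Selberg's sieve actually appears elsewhere in the paper (in the proof of Lemma~\ref{delx}, used for Theorem~\ref{g1bd}) rather than here.
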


We show in Section 3 that $0.45<\al<0.46$ is permissible in Theorem \ref{gnbd}(i).

For real $x, y$, let $\Psi(x, y)$ denote the number of positive integers $\le x$ all
of whose prime factors do not exceed $y$. These are $y$-\emph{smooth numbers} and have been
well-studied.
In 1930, Dickman \cite{dickman} proved that for any $\alpha \leq 1$,
$$\lim_{x \to \infty} \frac{\Psi(x,x^\alpha)}{x} $$
exists and equals $\rho(1/\alpha)$ where $\rho(t)$ is
defined for $t \geq 0$ as the continuous solution of the
equations $\rho(t) = 1$ for $0\leq t\leq 1$
and $-t\rho'(t) = \rho(t-1)$ for $t \geq 1$.  Later authors
derived refined results.
We refer to \cite{tenen} for an excellent survey on smooth numbers.
An important conjecture on smooth numbers in short intervals is the following.

\begin{conj}\label{conj1}
Let $\es >0$. For sufficiently large $x$, we have
\begin{align*}
\Psi(x+x^\es, x^\es) -\Psi(x, x^\es)\gg x^\es.
\end{align*}
\end{conj}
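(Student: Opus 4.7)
This statement is posed as an open conjecture; I do not see how to establish it unconditionally, and so the best I can do is describe the natural approach and indicate where it breaks down.

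The density heuristic, coming from Dickman's theorem, predicts
$$\Psi(x+x^\es,x^\es)-\Psi(x,x^\es)\sim \rho(1/\es)\,x^\es,$$
and since $\rho(1/\es)$ is a positive constant depending only on $\es$, the conjectured lower bound $\gg x^\es$ would follow at once from this asymptotic. All of the difficulty lies in making the heuristic rigorous in so short an interval.

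My plan would be the saddle-point method of Hildebrand and Tenenbaum. Write $\zeta(s,y)=\prod_{p\le y}(1-p^{-s})^{-1}$ and apply Perron's formula to obtain, with $y=x^\es$,
\begin{align*}
\Psi(x+x^\es,y)-\Psi(x,y)=\frac{1}{2\pi i}\int_{\sigma-iT}^{\sigma+iT}\zeta(s,y)\,\frac{(x+x^\es)^s-x^s}{s}\,ds+E,
\end{align*}
and shift the contour to the saddle $\sigma=\al(x,y)$ determined by $\sum_{p\le y}(\log p)/(p^\sigma-1)=\log x$. The contribution near the saddle, evaluated by the usual Laplace approximation, reproduces the predicted main term $\rho(1/\es)x^\es$. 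The remaining task is to show that the off-saddle part of the integral is negligible; this reduces to uniform control of $|\zeta(\sigma+it,y)|$ over a range of $t$ that widens precisely as the length $x^\es$ of our interval shrinks.

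That short-interval regime is the main obstacle. Friedlander--Granville, Hildebrand, and others have established the expected asymptotic for intervals of length $x^\theta$ once $\theta$ exceeds certain explicit thresholds, but for arbitrary $\es>0$ the bounds presently known on $|\zeta(\sigma+it,y)|$ are not strong enough to push $\theta$ down to $\es$. Closing the gap would appear to require either new zero-density type information for $\zeta(s,y)$ that is uniform in $y$, or a combinatorial identity for smooth numbers playing the role that Vaughan's or Heath--Brown's identities play for primes. Either would be a substantial advance, which is presumably why the statement is recorded here only as a conjecture.
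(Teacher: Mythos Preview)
Your assessment is correct and matches the paper's treatment exactly: the statement is recorded as an open conjecture, not a theorem, and the paper offers no proof of it. Your heuristic discussion via the Dickman density and the saddle-point approach to $\Psi$ in short intervals is a reasonable account of why the statement is plausible and why it remains out of reach, though the paper itself does not include any such discussion; it simply states the conjecture and then assumes it as a hypothesis in the subsequent conditional result on $g(n)$.
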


This is still open. Assuming Conjecture \ref{conj1}, we have the following.

\begin{theorem}\label{grim}
Let $\epsilon >0$. Then $g(n)<n^{\epsilon}$ for large $n$ assuming
Conjecture \ref{conj1}.
\end{theorem}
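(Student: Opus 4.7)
The plan is a Hall-type pigeonhole argument that pits the hypothesized lower bound for smooth numbers in short intervals against the prime number theorem. Suppose, for contradiction, that $g(n) \ge n^{\epsilon}$ for arbitrarily large $n$. Setting $k = \lfloor n^{\epsilon} \rfloor$, the definition of $g(n)$ furnishes distinct primes $P_1, P_2, \ldots, P_k$ with $P_i \mid (n+i)$ for each $1 \le i \le k$. The key observation I would exploit is that whenever $n+i$ happens to be $n^{\epsilon}$-smooth, the prime $P_i$ assigned to it is forced to be at most $n^{\epsilon}$, simply because $P_i$ is one of the prime factors of $n+i$.

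Next I would apply Conjecture \ref{conj1} with $x = n$ to the set
\[
S \;=\; \{\, i : 1 \le i \le k,\; n+i \text{ is } n^{\epsilon}\text{-smooth}\,\}.
\]
The conjecture yields $|S| = \Psi(n+n^{\epsilon}, n^{\epsilon}) - \Psi(n, n^{\epsilon}) \gg n^{\epsilon}$. On the other hand, by the observation of the previous paragraph, $\{P_i : i \in S\}$ is a family of distinct primes not exceeding $n^{\epsilon}$, so $|S| \le \pi(n^{\epsilon})$. The prime number theorem gives $\pi(n^{\epsilon}) \sim n^{\epsilon}/(\epsilon \log n) = o(n^{\epsilon})$, so comparing the two bounds on $|S|$ produces a contradiction once $n$ is sufficiently large. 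Hence $g(n) < n^{\epsilon}$ for all large $n$.

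I do not anticipate any substantive obstacle. The bulk of the work has been outsourced to Conjecture \ref{conj1}, and the finish through $\pi(n^{\epsilon}) = o(n^{\epsilon})$ is immediate. The only matters requiring attention are bookkeeping ones: aligning the window $(n, n+n^{\epsilon}]$ of the conjecture with the range $1 \le i \le \lfloor n^{\epsilon}\rfloor$ appearing in the prime representation (straightforward up to the harmless rounding of $n^{\epsilon}$ to $\lfloor n^{\epsilon}\rfloor$), and verifying that the extra factor of $1/\log n$ in $\pi(n^{\epsilon})$ is enough to defeat whatever implied constant appears in $\gg n^{\epsilon}$, which it is for every fixed $\epsilon > 0$ once $n$ is large.
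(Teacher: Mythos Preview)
Your argument is correct and is essentially the same as the paper's: the paper packages the pigeonhole step as Lemma~\ref{key} (if $\Psi(x+z,y)-\Psi(x,y)>\pi(y)$ then $g(\lfloor x\rfloor)<z$) and then invokes Conjecture~\ref{conj1} together with $\pi(n^{\epsilon})=o(n^{\epsilon})$, which is exactly what you have written out in unpackaged form.
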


Let $p_i$ denote the $i$th prime. As a consequence of Theorem \ref{grim}, we obtain

\begin{coro}
Assume Grimm's conjecture and Conjecture \ref{conj1}. Then
for any $\epsilon >0$,
\begin{align}
p_{i+1}-p_i<p_i^{\es}
\end{align}
for sufficiently large $i$.
\end{coro}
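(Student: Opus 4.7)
The plan is to combine Grimm's conjecture with Theorem \ref{grim} in the most direct way possible. Grimm's conjecture converts a large prime gap into a lower bound for $g(p_i)$, while Theorem \ref{grim} provides the upper bound $g(n) < n^{\delta}$ for every $\delta > 0$ and all sufficiently large $n$, assuming Conjecture \ref{conj1}. Playing these two bounds against each other will yield the claim.

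Concretely, first I would fix a prime $p_i$ and set $n = p_i$ together with $k = p_{i+1} - p_i - 1$. Then every integer $n+j$ with $1 \le j \le k$ lies strictly between the consecutive primes $p_i$ and $p_{i+1}$, and is therefore composite. Grimm's conjecture, applied to this block of $k$ composites, asserts that $(n,k)$ has a prime representation. By the definition of $g$, this forces
\begin{equation*}
g(p_i) \;\ge\; p_{i+1} - p_i - 1.
\end{equation*}

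Next, I would invoke Theorem \ref{grim}: under Conjecture \ref{conj1}, for any $\delta > 0$ and all sufficiently large $n$ one has $g(n) < n^{\delta}$. Applying this with $\delta = \es / 2$ and $n = p_i$ gives
\begin{equation*}
p_{i+1} - p_i - 1 \;\le\; g(p_i) \;<\; p_i^{\es/2}
\end{equation*}
for $i$ sufficiently large. Since $p_i^{\es/2} + 1 < p_i^{\es}$ once $p_i$ is large enough, the desired inequality $p_{i+1} - p_i < p_i^{\es}$ follows.

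There is no real obstacle in this corollary: it is essentially a one-line consequence of Theorem \ref{grim} once Grimm's conjecture is used to bound $g(p_i)$ from below. All the substantive work sits inside Theorem \ref{grim}; the corollary is simply the observation that Grimm translates prime-gap data into $g$-data, after which the conditional upper bound for $g$ immediately forces short gaps between consecutive primes.
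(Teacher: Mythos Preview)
Your proposal is correct and is precisely the argument the paper has in mind: the paper does not give a separate proof of the corollary but simply states it ``as a consequence of Theorem~\ref{grim},'' and your write-up supplies exactly the intended two steps --- Grimm's conjecture gives $g(p_i)\ge p_{i+1}-p_i-1$, and Theorem~\ref{grim} gives $g(p_i)<p_i^{\es/2}$ for large $i$.
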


If we assume Grimm's conjecture alone, then Erd\H{o}s and Selfridge\cite{erd}
have shown that
$$p_{i+1} - p_i \ll (p_i/\log p_i)^{1/2}, $$
which is something well beyond what the Riemann hypothesis would imply
about gaps between consecutive primes.
Indeed, the Riemann hypothesis implies an upper bound of $O(p_i^{1/2} (\log p_i))$.
It was conjectured by Cram\'er \cite{cramer} in 1936 that
$$p_{i+1} - p_{i} \ll (\log p_i)^2 $$
If Cram\'er's conjecture is true, then the result of Ramachandra, Shorey and
Tijdeman \cite{rst75} would imply Grimm's conjecture, at least for sufficiently
large numbers.
In \cite{laishram-shorey}, Laishram and Shorey verified Grimm's conjecture for
all
$n < 1.9 \times 10^{10}$.  They also checked that $p_{i+1} - p_i < 1 + (\log p_i)^2$ for
$i \leq 8.5 \times 10^8$.

It is worth mentioning that
there are several weaker versions of Grimm's conjecture that have also
been attacked using methods of transcendental number theory.
For an integer $\nu >1$, we denote
by $\omega(\nu)$ the number of distinct prime divisors of
$\nu$ and let
$\omega(1)=0$.
A weaker version of Grimm's conjecture states that \emph{if $n+1, n+2, \ldots, n+k$
are all composite numbers, then $\om(\prod^k_{i=1}(n+i))\ge k$}. This conjecture is also open
though much progress has been made towards it by Ramachandra, Shorey and Tijdeman \cite{rashti}.
\par
We define $g_1(n)$ to be the maximum positive integer $k$ such that
$$\om(\prod^l_{i=1}(n+i))\ge l$$ for all $1\le l\le k$. Observe that $g_1(n)\ge g(n)$. We prove

\begin{theorem}\label{g1bd}
There exists a $\ga$ with $0<\ga<\frac{1}{2}$ such that
\begin{align}
g(n)\le g_1(n)<n^{\ga}
\end{align}
for large values of $n$.
\end{theorem}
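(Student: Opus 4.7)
My plan is to argue by contradiction, paralleling the proof of Theorem \ref{gnbd}(i). Set $k := \lceil n^\gamma \rceil$ and suppose for contradiction that $g_1(n) \ge k$. By definition of $g_1$, applying the inequality at $l = k$ gives
$$\omega\!\left(\prod_{i=1}^{k}(n+i)\right) \ge k.$$
For an auxiliary parameter $y>0$, the standard decomposition of $\omega$ according to whether a prime factor is at most $y$ or exceeds $y$ yields
$$\omega\!\left(\prod_{i=1}^{k}(n+i)\right) \le \pi(y) + (k - \Psi_{y,k}) \cdot \frac{\log(2n)}{\log y},$$
where $\Psi_{y,k} := \Psi(n+k, y) - \Psi(n, y)$ counts the $y$-smooth integers in $[n+1, n+k]$. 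Each prime $p > y$ dividing the product arises from some non-$y$-smooth $n+i$, and such an $n+i$ has at most $\log(2n)/\log y$ distinct prime factors exceeding $y$. The inequality $g(n) \le g_1(n)$ is immediate from the definitions, since any prime representation of $(n, l)$ automatically certifies $l$ distinct primes dividing $\prod_{i=1}^{l}(n+i)$.

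Writing $y = n^\beta$ and combining the two inequalities rearranges to
$$\Psi_{y,k} \le k(1-\beta) + \beta\,\pi(y) + o(k).$$
To contradict this, I would deploy the Selberg sieve together with unconditional lower bounds for $\Psi(x+h, y)$ in short intervals --- precisely the toolkit developed in Section~3 to deliver $\alpha < 0.46$ for Theorem \ref{gnbd}(i). Such short-interval estimates give, in the admissible ranges of $y$ and $h$, a matching lower bound of the form
$$\Psi(n+k, y) - \Psi(n, y) \gg k\,\rho(1/\beta),$$
uniform for large $n$. Choosing $\beta$ so that $\rho(1/\beta) > 1 - \beta$ with margin to absorb the lower-order $\beta\pi(y) = O(n^\beta/\log n)$ term forces the two estimates on $\Psi_{y,k}$ to conflict, yielding $g_1(n) < k$.

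The principal obstacle is making $\gamma < 1/2$ actually achievable. A purely averaged analysis only delivers the density crossover $\rho(1/\beta) = 1 - \beta$ near $\beta \approx 0.567$, which would produce only $\gamma \gtrsim 0.567$. Pushing $\gamma$ strictly below $1/2$ relies on the sharp Selberg-sieve bounds of Section~3 --- the same refinements that let Theorem \ref{gnbd}(i) reach $\alpha < 0.46$ --- applied now to the single index $l = k$ rather than through a Hall-marriage argument. Verifying that the Section~3 machinery produces an exponent strictly less than $1/2$ for this $g_1$-variant is the technical heart of the proof; once this is in hand, the contradiction is immediate and $g_1(n) < n^\gamma$ follows for all sufficiently large $n$.
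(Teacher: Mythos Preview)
Your approach has a genuine gap that prevents it from ever reaching $\gamma<\tfrac12$. From your inequality $\Psi_{y,k}\le k(1-\beta)+\beta\,\pi(y)+o(k)$ with $y=n^{\beta}$ and $k=\lceil n^{\gamma}\rceil$, the term $\beta\,\pi(y)\sim n^{\beta}/\log n$ is $o(k)$ only when $\beta\le\gamma$. But the contradiction you seek requires the short-interval smooth count to have density exceeding $1-\beta$; even the conjectural density is $\rho(1/\beta)$, and as you yourself note, $\rho(1/\beta)>1-\beta$ holds only for $\beta\gtrsim 0.567$. The two constraints $\beta\le\gamma<\tfrac12$ and $\beta>0.567$ are incompatible, so no choice of parameters can yield a contradiction in the range $\gamma<\tfrac12$. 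Your appeal to ``the sharp Selberg-sieve bounds of Section~3'' does not rescue this: Section~3 contains no sieve argument (it simply invokes Lemma~\ref{eta}), and in any event no unconditional lower bound on smooth numbers can exceed the Dickman density, which is what fails here.

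The paper's argument avoids this obstruction by going in the opposite direction: rather than upper-bounding $\omega(P)$, it lower-bounds $P$ itself. With $k=\lfloor n^{\alpha}\rfloor$ for a suitable $\alpha<\tfrac12$, the key input is Lemma~\ref{delx} (Ramachandra's method, via the Selberg upper-bound sieve and a van~der~Corput exponential-sum estimate), which shows $\sum_{j\le k}\bigl(\pi(\tfrac{n+k}{j})-\pi(\tfrac{n}{j})\bigr)\ge\delta k$ for an explicit $\delta>0$. Every prime counted here exceeds $n/k\ge n^{1-\alpha}$ and divides $P$. Combining these $\delta k$ large prime factors with the remaining $(1-\delta)k$ primes guaranteed by $\omega(P)\ge k$, together with $k!\mid P$, gives $P\ge k!\bigl(\prod_{k<p\le p_{(1-\delta)k}}p\bigr)n^{(1-\alpha)\delta k}$; for $\gamma=\max\bigl(\alpha,\tfrac{1-\delta(1-\alpha)}{2-\delta}\bigr)<\tfrac12$ this exceeds $(2n)^{k}>P$, the desired contradiction (this is Theorem~\ref{g1bdlem}). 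The essential difference is that large primes are exploited for their \emph{size} in $P$, not for their scarcity.
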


We show in Section 5 that $\ga=\frac{1}{2}-\frac{1}{390}$ is permissible.
This result will be proved as a consequence of the following theorem
which is of independent interest.

\begin{theorem}\label{g1bdlem}
Suppose there exists $0<\al<\frac{1}{2}$ and $\del >0$ such that
\begin{align}\label{pixal}
\sum_{j\le m^\al}\left\{\pi(\frac{m+m^\al}{j})-\pi(\frac{m}{j})\right\}\ge \del m^\al
\end{align}
holds for large $m$. Then $g_1(n)<n^{\gamma}$ with
\begin{align*}
\gamma =\max(\al, \frac{1-\del(1-\al)}{2-\delta})<\frac{1}{2}.
\end{align*}
for large $n$.
\end{theorem}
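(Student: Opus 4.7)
Plan. Let $k=\lceil n^\gamma\rceil$; the plan is to show
\[
\omega\!\Bigl(\prod_{i=1}^{k}(n+i)\Bigr)<k
\]
for all sufficiently large $n$, which by the definition of $g_1$ forces $g_1(n)<k\le n^\gamma$. I start from the exact identity
\[
\omega\!\Bigl(\prod_{i=1}^{k}(n+i)\Bigr)=\pi(k)+\sum_{i=1}^{k}\omega_{>k}(n+i),
\]
valid because every prime $\le k$ has a multiple in any window of length $k$, and every prime $>k$ divides at most one of the $n+i$. I then tile $(n,n+k]$ by disjoint subintervals $(m_t,m_t+m_t^\alpha]$ with $m_t=n+t\lfloor n^\alpha\rfloor$ and apply (\ref{pixal}) to each; because $\alpha<1/2$, any integer in such a subinterval has at most one prime factor exceeding $n^{1-\alpha}$ (two would multiply past $n^{2(1-\alpha)}>n+k$), so the sum on each tile equals the number of integers of the form $jp$ in the tile with $j\le n^\alpha$ and $p$ prime. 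Summing, the set
\[
S=\{i\in[1,k]:n+i=j_ip_i,\ j_i\le n^\alpha,\ p_i\text{ prime}\}
\]
satisfies $|S|\ge(1-o(1))\delta k$, and for $i\in S$ the cofactor $j_i\le n^\alpha\le k$ is $k$-smooth (using $\gamma\ge\alpha$), so the rough part of $n+i$ is $R_i=p_i\ge n^{1-\alpha}$ and $\omega_{>k}(n+i)=1$. For $i\in N:=[1,k]\setminus S$, every prime factor of $n+i$ is $<n^{1-\alpha}$ and the rough part satisfies $\log R_i\ge\omega_{>k}(n+i)\log k$.

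The crux is the log-weighted computation. Writing $\log\prod_{i=1}^{k}(n+i)=\sum_{p\le k}\log p\cdot\sum_{i}v_p(n+i)+\sum_{i}\log R_i$ and using Mertens' theorem (the inner sum is $k/(p-1)+O(\log k/\log p)$, and $\sum_{p\le k}\log p/(p-1)=\log k+O(1)$), one obtains the asymptotic
\[
\sum_{i=1}^{k}\log R_i=k\log n-k\log k+O(k)=k(1-\gamma)\log n+O(k).
\]
On the other hand, combining $\log p_i\ge(1-\alpha)\log n$ on $S$ with $\log R_i\ge\omega_{>k}(n+i)\gamma\log n$ on $N$ yields
\[
\sum_{i=1}^{k}\log R_i\ge|S|(1-\alpha)\log n+\gamma\log n\sum_{i\in N}\omega_{>k}(n+i).
\]
Dividing by $\log n$ and combining produces
\[
|S|(1-\alpha)+\gamma\sum_{i\in N}\omega_{>k}(n+i)\le k(1-\gamma)+o(k). \quad(\ast)
\]

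To close the argument, suppose $\omega(\prod)\ge k$; then $\sum_{i\in N}\omega_{>k}(n+i)\ge k-|S|-\pi(k)$, and inserting this into $(\ast)$ gives $|S|(1-\alpha-\gamma)\le k(1-2\gamma)+o(k)$, i.e.,
\[
|S|\le\frac{k(1-2\gamma)}{1-\alpha-\gamma}+o(k).
\]
Comparing with $|S|\ge(1-o(1))\delta k$ from the tiling forces $\delta\le(1-2\gamma)/(1-\alpha-\gamma)$, equivalently $\gamma\le(1-\delta(1-\alpha))/(2-\delta)$, contradicting the choice of $\gamma$.

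The main obstacle is the Mertens-type evaluation of $\sum_i\log R_i$ with error $O(k)$, which is standard but must be kept tight enough that the final arithmetic produces exactly the constant $(2-\delta)$ in the denominator; the remaining pieces---iterating (\ref{pixal}) across the window, showing $R_i=p_i$ on $S$, and the final algebraic reduction $(1-2\gamma)/(1-\alpha-\gamma)\leftrightarrow(1-\delta(1-\alpha))/(2-\delta)$---are essentially bookkeeping.
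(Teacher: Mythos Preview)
Your overall strategy is sound and in fact more explicit than the paper's in one important respect: you correctly take $k\approx n^\gamma$ and \emph{tile} $(n,n+k]$ into subintervals of length $\approx n^\alpha$ so that hypothesis \eqref{pixal} can be applied on each tile. (The paper's written proof sets $k=[n^\alpha]$ and applies \eqref{pixal} once; this makes the final chain of inequalities problematic, and the intended argument really requires $k\approx n^\gamma$ together with the iteration you describe.) Your identity $\omega(P)=\pi(k)+\sum_i\omega_{>k}(n+i)$ and the Mertens evaluation $\sum_i\log R_i=k(1-\gamma)\log n+O(k)$ are both correct and give a clean reformulation of the paper's comparison of $P$ against $(2n)^k$.

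However, your argument does not close for the stated value of $\gamma$. On $N$ you use only the crude bound $\log R_i\ge\omega_{>k}(n+i)\cdot\log k=\omega_{>k}(n+i)\cdot\gamma\log n$, i.e.\ you price every prime exceeding $k$ at exactly $\log k$. Plugging this into $(\ast)$ and then assuming $\omega(P)\ge k$ yields, after your algebra, $\delta\le\frac{1-2\gamma}{1-\alpha-\gamma}+o(1)$. But when $\gamma=\frac{1-\delta(1-\alpha)}{2-\delta}$ one checks directly that $\frac{1-2\gamma}{1-\alpha-\gamma}=\delta$, so the conclusion reads $\delta\le\delta+o(1)$, which is no contradiction. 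Thus as written you only obtain $g_1(n)<n^{\gamma'}$ for every $\gamma'>\gamma$, not $g_1(n)<n^\gamma$.

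The missing ingredient, which the paper supplies, is that the primes $>k$ dividing $P$ are \emph{distinct}: if there are $M$ of them, their product is at least $\prod_{k<p\le p_{\pi(k)+M}}p=\exp\bigl(\theta(p_{\pi(k)+M})-\theta(k)\bigr)$. By Lemma~\ref{pix}(iii),(iv) this is $\ge\exp\bigl(M\log k+M\log\log k-O(M)\bigr)$, not merely $\exp(M\log k)$. Equivalently, in your language one should replace $\sum_{i\in N}\log R_i\ge\bigl(\sum_{i\in N}\omega_{>k}(n+i)\bigr)\log k$ by the sharper lower bound involving $\theta(p_{(1-\delta')k})-\theta(k)$, as the paper does. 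The resulting extra term of order $(1-\delta)k\log\log n$ on the right of $(\ast)$ turns the borderline inequality $\delta\le\delta+o(1)$ into $\delta\le\delta-\frac{(1-\delta)\log\log n}{(1-\alpha-\gamma)\log n}+o\!\bigl(\tfrac{\log\log n}{\log n}\bigr)$, which \emph{is} false for large $n$ and gives the contradiction at the exact $\gamma$. (A minor side remark: your tiles $(m_t,m_t+m_t^\alpha]$ are not literally disjoint since $m_t^\alpha\ge\lfloor n^\alpha\rfloor$, but the overlap is $O(1)$ per tile and is absorbed in the $o(k)$ terms.)
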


A conjecture coming from primes in short intervals states that(see for example Maier \cite{maier}):
\begin{align*}
\pi(x+x^\al)-\pi(x)\sim \frac{x^\al}{\log x} \ {\rm as} \ x\rightarrow \infty.
\end{align*}
Assuming this conjecture, we obtain for $m\rightarrow \infty$,
\begin{align*}
&\sum_{j\le m^\al}\left\{\pi(\frac{m+m^\al}{j})-\pi(\frac{m}{j})\right\}\sim
\sum_{j\le m^\al}\frac{\frac{m^\al}{j}}{\log \frac{m}{j}}=
\frac{m^{\al}}{\log m}\sum_{j\le m^\al}\frac{1}{j(1-\frac{\log j}{\log m})}\\
&\sim \frac{m^{\al}}{\log m} \int^{m^\al}_1\frac{dt}{t(1-\frac{\log t}{\log m})}.
\end{align*}
Taking $u=\frac{\log t}{\log m}$, we get
\begin{align*}
&\sum_{j\le m^\al}\left\{\pi(\frac{m+m^\al}{j})-\pi(\frac{m}{j})\right\}\\
&\sim m^{\al}\int^{\al}_0\frac{du}{1-u}=m^\al[-\log (1-u)]^{\al}_{0}=-m^\al\log (1-\al)
\end{align*}
as $m\rightarrow \infty$. Continuing as in the proof of Theorem \ref{g1bdlem},
we obtain $g_1(n)<n^{\al_1}$ with
\begin{align*}
\al_1 =\max(\al, \frac{1+(1-\al)\log (1-\al)}{2+\log (1-\al)}).
\end{align*}
Since $\log(1-\al)\approx -\al$ for $0<\al <1$, we see that
\begin{align*}
&\frac{1+(1-\al)\log (1-\al)}{2+\log (1-\al)}\approx \frac{1-\al(1-\al)}{2-\al}=
\frac{1}{2}(1-\al+\al^2)(1-\frac{\al}{2})^{-1}\\
&\approx \frac{1}{2}(1-\al+\al^2)(1+\frac{\al}{2})=\frac{1}{4}(2-\al+\al^2+\al^3)
\end{align*}
and the function $\frac{1}{4}(2-\al+\al^2+\al^3)$ attains its maximum at $\al=\frac{1}{3}$
where the value of $\al_1\approx 0.4567$. Hence, it is unlikely that we can get a result with
$g_1(n)<n^{\ga}$ with $\ga<.4567$, by these methods. As such, this value $g_1(n)=O(n^\al)$
seems to agree with the permissible value of $0.45<\al<0.46$ in $g(n)=O(n^{\al})$.

It was noted by Erd\"os and Selfridge
in \cite{erd} that ``\emph{the assertion
$\ga <\frac{1}{2}$ seems to follow from a recent result of Ramachandra
\cite{rama} but we do not give the details here}.'' In \cite{erd-pom},
Erd\"os and Pomerance
noted again that ``\emph{Indeed from the proof in \cite{rama}, it
follows that there is an $\al > 0$ such that for all large $n$ a positive
proportion of the integers in $(n,n+n^{\al}]$ are divisible by a prime
which exceeds $n^{\frac{15}{26}}$. Using this result with the method in
\cite{erd} gives $g(n) < n^{\frac{1}{2}-c}$ for some fixed $c > 0$
and all large $n$}.'' However there is no proof anywhere in the literature
about this fact. We give a complete proof in this paper by generalizing the result of
Ramachandra \cite{rama} in Lemma \ref{delx}.

\section{Preliminaries and Lemmas}

We introduce some notation.
We shall always write $p$ for a prime number. Let $\La(n)$ be the von Mangoldt
function which is defined as $\La(n)=\log p$ if $n=p^r$ for some positive integer
$r$ and $0$ otherwise. We write $\theta(x)=\sum_{p\le x}\log p$.
For real $x, y$, let $\Psi(x, y)$ denote the number of positive integers $\le x$ all
of whose prime factors do not exceed $y$.
We also write $\log_2 x$ for $\log \log x$.
We begin with some results from prime number theory.

\begin{lemma}\label{pix} Let $k, t\in \Z$ and $x\in \R$. We have
\begin{itemize}
\item[(i)] $\pi(x)<\frac{x}{\log x}(1+\frac{1.2762}{\log x})$ for $x>1$.
\item[(ii)] $p_t>t(\log t+\log_2 t-c_1)$ for some $c_1>0$ and for large $t$.
\item[(iii)] $\theta (x)\le 1.00008x$ for $x>0$.
\item[(iv)] $\theta(p_t)>t(\log t+\log_2 t -c_2)$ for some $c_2>0$ and for large $t$.
\item[(v)] $k!>\sqrt{2\pi k}~e^{-k}k^{k}e^{\frac{1}{12k+1}}$ for $k>1$.
\end{itemize}
\end{lemma}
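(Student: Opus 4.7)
The plan is to deduce each of the five items from standard explicit bounds in the analytic prime number theory literature, since none of the claimed estimates are sharp and all follow from well-known sharpenings of the prime number theorem and of Stirling's formula. I would treat (i) and (iii) together, (ii) and (iv) together, and (v) on its own.

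For (i) and (iii), I would appeal to the explicit inequalities of Rosser--Schoenfeld and their refinements by Dusart. The bound $\pi(x) < \frac{x}{\log x}\bigl(1 + \frac{1.2762}{\log x}\bigr)$ holds for $x$ beyond an explicit threshold $x_0$ by these refinements, and the finitely many remaining cases $1 < x \le x_0$ are verified by direct computation. Likewise $\theta(x) \le 1.00008\, x$ for all $x > 0$ follows by combining the effective upper bound for $\theta(x)$ for large $x$ with direct tabulation for small $x$, the constant $1.00008$ being well within the range of what these effective estimates provide.

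For (ii) and (iv), I would invoke the standard asymptotic expansion
\[
p_t = t\bigl(\log t + \log_2 t - 1 + o(1)\bigr),
\]
obtained by inverting the prime number theorem $\pi(x) \sim x/\log x$ with the secondary term $x/(\log x)^2$. This immediately yields (ii) for any fixed $c_1 > 1$ and all sufficiently large $t$. For (iv), one combines this with the sharp form $\theta(p_t) = p_t + O\bigl(p_t/(\log p_t)^N\bigr)$ of the prime number theorem, which itself follows from (iii) together with a matching lower bound $\theta(x) \gg x$, to conclude $\theta(p_t) > t(\log t + \log_2 t - c_2)$ for some $c_2 > 0$ and all large $t$.

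Finally, (v) is Robbins's well-known sharpening of Stirling's formula, which states that for every integer $k \ge 1$,
\[
\sqrt{2\pi k}\, k^k e^{-k} e^{\frac{1}{12k+1}} \;<\; k! \;<\; \sqrt{2\pi k}\, k^k e^{-k} e^{\frac{1}{12k}};
\]
the lower inequality is precisely (v). No step in this lemma is genuinely difficult; the whole statement is a convenient packaging of sharp effective estimates, and the only real ``obstacle'' is bookkeeping---selecting references that give the stated numerical constants $1.2762$ and $1.00008$ and verifying the small-$x$ and small-$t$ cases by hand or machine.
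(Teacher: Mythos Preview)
Your proposal is correct and matches the paper's own treatment: the paper does not prove Lemma~\ref{pix} either, but simply cites Dusart for (i), (iii), (iv), Rosser--Schoenfeld for (ii), and Robbins for (v). Your slightly more detailed derivation of (ii) and (iv) from the asymptotic $p_t = t(\log t + \log_2 t - 1 + o(1))$ is fine but not needed, since the cited references already give explicit constants.
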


The estimate $(ii)$ is due to Rosser and Schoenfeld \cite{rosc}.
Inequalities $(i), (iii)$ and $(iv)$
are due to Dusart \cite{Dus}. The estimate $(v)$ is Stirling's formula, see \cite{rob}.

The following results are due to Friedlander and Lagarias \cite{frla}.

\begin{lemma}\label{smooth} Let $0<\es <1$ be fixed. Then there are positive
constants $c_0$ and $c_1$ depending only on $\es$ such that there are at most
$c_1X{\rm exp}(-(\log X)^{\frac{1}{3}-\es})$ many $n$ with $1\le n\le X$ which do not satisfy
\begin{align}\label{psies>}
\Psi(n+n^\es, n^\es)-\Psi(n, n^\es)\ge c_0n^\es.
\end{align}
\end{lemma}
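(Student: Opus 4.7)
The plan is a classical first-moment/second-moment (variance) argument, upgraded by sharp saddle-point estimates for the counting function $\Psi(x,y)$. Write $S(n) := \Psi(n+n^\epsilon, n^\epsilon) - \Psi(n, n^\epsilon)$ for the number of $n^\epsilon$-smooth integers in the interval $(n, n+n^\epsilon]$. The first step is to establish that $S(n)$ has the expected mean value, namely
\begin{equation*}
\sum_{n \le X} S(n) \gg X^{1+\epsilon}.
\end{equation*}
By interchanging the order of summation, each $X^\epsilon$-smooth integer $m \le X$ contributes to those $n$ with $m - n^\epsilon \le n < m$, a set of length roughly $m^\epsilon$. Combining this with Dickman's theorem $\Psi(X, X^\epsilon) \sim \rho(1/\epsilon) X$ yields the stated lower bound.

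Next I would bound the second moment $\sum_{n \le X} S(n)^2$. Expanding the square and swapping the order of summation produces a weighted count of ordered pairs $(m_1, m_2)$ of $X^\epsilon$-smooth integers in $[1,X]$ with $|m_1 - m_2| \le X^\epsilon$, each pair contributing at most $X^\epsilon$ to the sum. A ``near-neighbour'' refinement of the Dickman estimate, obtained essentially by iterating the mean-value argument on intervals of length $X^\epsilon$, gives $\sum_n S(n)^2 \ll \rho(1/\epsilon)^2 X^{1+2\epsilon}$. Subtracting the square of the mean, the variance
\begin{equation*}
V(X) := \sum_{n \le X} \bigl( S(n) - \rho(1/\epsilon)\, n^\epsilon \bigr)^2
\end{equation*}
is then small, and Chebyshev's inequality controls the number of $n$ for which $S(n) < c_0 n^\epsilon$ for any fixed $c_0 < \rho(1/\epsilon)$.

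The \emph{main obstacle} is upgrading the polynomial saving given by a naive Chebyshev inequality to the stretched-exponential saving $\exp(-(\log X)^{1/3 - \epsilon})$ claimed in the lemma. To achieve this I would invoke the pointwise asymptotics of de Bruijn and Hildebrand for $\Psi(x,y)$, whose error term has precisely this quasi-exponential shape and ultimately derives from the Vinogradov zero-free region for $\zeta(s)$. Concretely, one represents $\Psi(n+n^\epsilon, n^\epsilon) - \Psi(n, n^\epsilon)$ by a Perron contour integral against the Dirichlet series of $n^\epsilon$-smooth numbers, shifts to a saddle-point contour, and estimates the resulting mean-square against the Hildebrand error term. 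Feeding this sharper pointwise input into the variance machinery, in place of the crude Dickman estimate, is what boosts the exceptional-set bound from a polynomial to the form $X \exp(-(\log X)^{1/3-\epsilon})$ proved by Friedlander and Lagarias.
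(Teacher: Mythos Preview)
The paper does not actually prove this lemma; immediately after stating it the authors write that it ``is obtained by taking $\alpha=\beta=\epsilon$ in \cite[Theorem~5]{frla}''. So on the paper's side there is nothing beyond a direct citation to Friedlander--Lagarias, and your proposal is not being compared to an argument in the paper but to the underlying Friedlander--Lagarias proof.

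Your sketch is in the right spirit --- Friedlander and Lagarias do proceed via a mean-value computation for the count of smooth numbers in short intervals, and the quality of the exceptional-set bound does ultimately come from error terms for $\Psi(x,y)$ tied to the zero-free region of $\zeta$. But the decisive step in your outline is too vague to stand as a proof. A bare Chebyshev inequality applied to a variance $V(X)$ yields only a polynomial saving in the exceptional set; the phrase ``feeding this sharper pointwise input into the variance machinery'' does not by itself explain how one arrives at a saving of the shape $\exp\bigl(-(\log X)^{1/3-\epsilon}\bigr)$. In \cite{frla} the specific exponent $1/3$ emerges from an optimization balancing the interval length, the smoothness parameter, and the Hildebrand-type error term for $\Psi(x,y)$; it is not simply the Vinogradov--Korobov exponent transported into a second-moment estimate. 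To turn your outline into a proof you would need to carry out that optimization explicitly (or isolate and cite the precise lemma in \cite{frla} that does so), rather than assert that it works. For the purposes of the present paper, the honest route is the one the authors take: cite the result.
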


\begin{lemma}\label{eta} There exist positive absolute constants $\al$ and $c_1$ with
$\frac{3}{8}<\al <\frac{1}{2}$ such that
\begin{align}\label{psies}
\Psi(n+n^\al, n^\al)-\Psi(n, n^\al)>c_1n^\al.
\end{align}
for sufficiently large $n$.
\end{lemma}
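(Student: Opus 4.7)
\medskip
\noindent\textbf{Proof plan for Lemma \ref{eta}.}
My plan is to construct about $n^\al$ smooth numbers in the short interval $I=(n,n+n^\al]$ explicitly, as products $pm$ with $p$ a prime in a chosen range and $m$ an $n^\al$-smooth integer. Fix $\al\in(3/8,1/2)$ and a parameter $\be$ satisfying $1-2\al<\be\le\al$; this interval for $\be$ is non-empty because $\al>1/3$. For each prime $p\in(n^\be,2n^\be]$ I consider the multiples $pm\in I$, which correspond to integers $m$ lying in the interval $J_p=(n/p,(n+n^\al)/p]$ of length roughly $n^\al/p$. Since $p>n^{1-2\al}$, every $m\in J_p$ satisfies $m<n^{2\al}$, and consequently $m$ has \emph{at most one} prime factor exceeding $n^\al$; in particular, the $n^\al$-smooth $m\in J_p$ yield $n^\al$-smooth numbers $pm\in I$.

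I would then estimate the bad $m$'s, i.e.\ those with a prime factor $q>n^\al$. Every bad $m\in J_p$ factors as $m=q\ell$ with $q$ prime, $q>n^\al$, and $\ell\le n^{1-\be-\al}$. For each fixed $\ell$ the count of such $m$'s equals the number of primes $q\in(n/(p\ell),(n+n^\al)/(p\ell)]$, which by Brun--Titchmarsh is at most $(2+o(1))n^\al/(p\ell\log(n/(p\ell)))$ when the interval has length $\ge 1$, and trivially at most one otherwise. Summing over $\ell\le n^{1-\be-\al}$ and replacing the sum by an integral gives
\begin{align*}
\#\{\text{bad }m\in J_p\}\;\le\;\frac{(2+o(1))n^\al}{p}\int_{0}^{1-\be-\al}\frac{du}{1-\be-u}\;=\;\frac{(2+o(1))n^\al}{p}\log\frac{1-\be}{\al}.
\end{align*}
Choosing $\al,\be$ so that $2\log((1-\be)/\al)<1$ leaves a positive proportion of $m\in J_p$ smooth, contributing $\gg n^\al/p$ smooth multiples of $p$ in $I$. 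With $\be$ taken close to $\al$ this constraint becomes $\al>1/(1+e^{1/2})\approx 0.378$, essentially the threshold $3/8$ appearing in the statement.

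To finish, I sum over primes $p$ in dyadic ranges covering $(n^\be,n^\al]$. By Mertens' theorem, the total count of pairs $(p,m)$ with $pm$ smooth in $I$, counted with multiplicity, is $\gg n^\al\log(\al/\be)$. Each smooth $N\in I$ is counted once per prime factor of $N$ in $(n^\be,n^\al]$, and this multiplicity is at most $\log N/\log n^\be\le 1/\be+O(1)$, so the number of \emph{distinct} $n^\al$-smooth $N\in I$ is $\gg n^\al$, as required. The hardest step is the rigorous and uniform application of Brun--Titchmarsh when $n^\al/(p\ell)$ is very small, combined with a clean treatment of the multiplicity reduction. An alternative route, taken by Friedlander and Lagarias \cite{frla}, proceeds via Perron's formula for the truncated Euler product $\prod_{p\le y}(1-p^{-s})^{-1}$ together with zero-density estimates for $\zeta(s)$ of Ramachandra type; this approach makes the exponent $3/8$ emerge directly from the analytic input referenced in the introduction.
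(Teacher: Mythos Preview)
The paper does not prove Lemma~\ref{eta} at all: it simply quotes \cite[Theorem~2.4]{frla} with the choice $x=n$, $y=z=n^{\al}$, $\al=\tfrac12-\tfrac{\eta}{2}$, and remarks that Harman~\cite{harman} gives $0.45<\al<0.46$. The Friedlander--Lagarias argument is analytic, resting on Perron's formula and zero--density estimates for $\zeta(s)$; you acknowledge this route at the end of your proposal as ``an alternative''. Your own elementary route, however, has a genuine gap, and it is precisely the step you flag as ``hardest''.

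The problem is the Brun--Titchmarsh bound you invoke. For a prime $q$ in an interval $(X,X+Y]$ the Montgomery--Vaughan inequality gives
\[
\pi(X+Y)-\pi(X)\le \frac{2Y}{\log Y},
\]
with $\log Y$, not $\log X$, in the denominator; there is no sieve upper bound of the shape $(2+o(1))Y/\log X$ valid for $Y$ as small as ours. With the correct denominator $\log\bigl(n^{\al}/(p\ell)\bigr)\approx(\al-\be-u)\log n$ your integral becomes $\int_{0}^{\al-\be}\frac{du}{\al-\be-u}$, which diverges. Worse, for $\ell>n^{\al}/p$ the interval for $q$ has length $<1$, and the ``trivially at most one'' bound summed over $\ell\le n^{1-\al}/p$ contributes about $n^{1-\al}/p$; since $\al<\tfrac12$ this already exceeds $|J_p|\approx n^{\al}/p$, so the inclusion--exclusion $|J_p|-\#\{\text{bad }m\}$ gives nothing. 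Equivalently, $\#\{m\in J_p:P^{+}(m)\le n^{\al}\}$ is exactly a short--interval smooth--number count of the same relative length $n^{\al-1}$ as the original problem, so one step of this decomposition does not reduce the difficulty.

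This is not a technicality that a cleaner treatment would fix. Producing $n^{\al}$--smooth numbers in an interval of length $n^{\al}$ with $\al<\tfrac12$ means working below the square--root barrier, and the known proofs (Friedlander--Lagarias, Harman) all require zero--density input of Ramachandra/Huxley type. Your construction would go through for $\al>\tfrac12$, but for the range $\tfrac38<\al<\tfrac12$ asserted in the lemma you must import the analytic machinery rather than treat it as an optional alternative.
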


Lemma \ref{smooth} is obtained by taking $\al=\be =\es$ in \cite[Theorem 5]{frla} and Lemma
\ref{eta} is obtained by taking $x=n, y=z=n^\al $ with $\al=\frac{1}{2}-\frac{\eta}{2}$
in \cite[Theorem 2.4]{frla}. From \cite[Theorem 2]{harman} and the remarks after that, a
permissible value of $\al$ in Lemma \ref{eta} is given by an $\al$ with $0.45<\al<0.46$.

The following is the key lemma which follows from the definition of $g(n)$
and relates the study of $g(n)$ to smooth numbers.

\begin{lemma}\label{key} Let $x, y, z\in \R$ be such that $\Psi(x+z, y)-\Psi(x, y)>\pi(y)$. Then
$g(\lf x\rf )<z$.
\end{lemma}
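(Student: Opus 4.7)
The plan is to argue by contradiction: I will suppose $g(n)\ge z$ where $n=\lfloor x\rfloor$ and derive more than $\pi(y)$ distinct primes below $y$, violating the definition of $\pi$.

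Setting $k=g(n)$, the integrality of $k$ forces $k\ge \lceil z\rceil$, so by the definition of $g$ there exist distinct primes $P_1,\ldots,P_k$ with $P_i\mid n+i$ for every $1\le i\le k$. I will then focus on the $y$-smooth integers lying in the half-open interval $(x,x+z]$, whose count is exactly $\Psi(x+z,y)-\Psi(x,y)$. Each such integer can be written $m=n+i$ with $i$ a positive integer satisfying $i\le x+z-\lfloor x\rfloor<z+1$, hence $i\le\lceil z\rceil\le k$, which guarantees that a prime $P_i$ from the prime representation is assigned to it. Because $m$ is $y$-smooth, every prime divisor of $m$---in particular $P_i$---is at most $y$.

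Collecting one such $P_i$ for each of the $\Psi(x+z,y)-\Psi(x,y)$ smooth integers in $(x,x+z]$ produces at least that many distinct primes, all bounded above by $y$. The hypothesis $\Psi(x+z,y)-\Psi(x,y)>\pi(y)$ then contradicts the fact that there are only $\pi(y)$ primes up to $y$, so the assumption $g(n)\ge z$ must fail.

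I foresee no serious obstacle here; the proof is essentially a pigeonhole argument between the $y$-smooth integers in a short interval and the primes below $y$. The only mild point to verify is the index bound $i\le k$ for integers in $(x,x+z]$, which reduces to the elementary inequality $x+z-\lfloor x\rfloor<z+1$ combined with $k\ge\lceil z\rceil$; beyond this bookkeeping, nothing is used except the definitions of $g(n)$, $\Psi$, and $\pi$.
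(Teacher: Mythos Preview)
Your proof is correct and follows the same pigeonhole idea as the paper: more than $\pi(y)$ many $y$-smooth integers in $(x,x+z]$ cannot each receive a distinct prime divisor $\le y$. If anything, your contradiction argument handles the index bookkeeping (verifying $1\le i\le k$ for each smooth $m=n+i$) more carefully than the paper's terse direct version.
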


\begin{proof}  Let $x \leq n_1 < n_2 < \cdots < n_t \leq x+z $
be all $y$-smooth numbers with $t > \pi(y)$.  Then, $(n_1, n_t - n_1)$
does not have a prime representation.
In particular, $(\lf x \rf , \lf z \rf)$
has no prime representation.
Thus $g(\lf x \rf) < z$.
\end{proof}

The next result is a generalization of a result of Ramachandra \cite{rama}.

\begin{lemma}\label{delx}
Let $\frac{1}{33}<\la<\frac{1}{29}$. For $\al=\frac{1-\la}{2}$ and for sufficiently large $x$, we have
\begin{align}\label{r2.5}
\sum_{n\le x^{\al}}\left\{\pi(\frac{x+x^\al}{n})-\pi(\frac{x}{n})\right\}\ge (\frac{1}{4}+\frac{\la}{2}-\es') x^\al
\end{align}
where $\es'>0$ is arbitrary small.
\end{lemma}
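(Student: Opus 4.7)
The plan is to reinterpret the left-hand side of \eqref{r2.5} via the substitution $m = np$, and then apply a Chebyshev-type identity to produce the lower bound.

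First, setting $m = np$ gives
\[
S := \sum_{n \le x^\al}\left\{\pi\Big(\tfrac{x+x^\al}{n}\Big) - \pi\Big(\tfrac{x}{n}\Big)\right\}
= \#\{(n, p) : n \le x^\al,\ p \text{ prime},\ x < np \le x+x^\al\}.
\]
Since $\al = (1-\la)/2 < 1/2$, no integer $m \in (x, x+x^\al]$ can have two prime factors both exceeding $x^{1-\al}$: their product would be at least $x^{2(1-\al)} = x^{1+\la} > m$. Hence the representation $m = np$ with $n \le x^\al$ (equivalently $p = m/n > x^{1-\al}$) is unique when it exists, and
\[
S = \#\{m \in (x, x+x^\al] : m \text{ has a prime factor exceeding } x^{1-\al}\}.
\]

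Next, I apply the Chebyshev identity $\log m = \sum_{d \mid m}\La(d)$. Summing over $m \in (x, x+x^\al]$ and using Stirling's formula (Lemma~\ref{pix}(v)) on the left,
\[
\sum_d \La(d)\, M_d = x^\al \log x + O(x^\al), \qquad M_d := \lf (x+x^\al)/d \rf - \lf x/d \rf.
\]
I split the right side at $d = x^\al$. For $d \le x^\al$, $M_d = x^\al/d + O(1)$, and Mertens' estimate $\sum_{d \le N}\La(d)/d = \log N + O(1)$ together with $\sum_{d \le x^\al}\La(d)=O(x^\al)$ gives a contribution of $\al x^\al \log x + O(x^\al)$. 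The contribution from prime powers $p^k$ with $k \ge 2$ is $O(x^{1/2})$ by Lemma~\ref{pix}(iii). Hence the remaining contribution, from primes $p > x^\al$ (where $M_p \in \{0,1\}$), must satisfy
\[
\sum_{x^\al < p \le x+x^\al}(\log p)\, M_p = (1-\al)\, x^\al \log x + O(x^\al).
\]

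Finally, I split this at $p = x^{1-\al}$ into medium ($p \in (x^\al, x^{1-\al}]$) and large ($p > x^{1-\al}$) contributions. Generalizing Ramachandra's argument from \cite{rama}, the key estimate is the upper bound
\[
\sum_{x^\al < p \le x^{1-\al}} (\log p)\, M_p \le \left(\tfrac14 + o(1)\right) x^\al \log x,
\]
obtained by combining Brun--Titchmarsh-style upper-bound sieving (yielding $M_p \le x^\al/p + 1$) with the pointwise bound $M_p \le 1$, together with the observation that each $m \in (x,x+x^\al]$ has only a bounded number of prime factors in the medium range (since $3\al > 1$ in our range of $\la$). Subtracting then gives
\[
\sum_{p > x^{1-\al}}(\log p)\, M_p \ge \left(\tfrac14 + \tfrac{\la}{2} - o(1)\right) x^\al \log x
\]
via the identity $(1-\al) - 1/4 = 1/4 + \la/2$. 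Because $\log p \le \log(x+x^\al) = (1+o(1))\log x$ for $p \le x + x^\al$, dividing yields the desired bound $S = \sum_{p > x^{1-\al}} M_p \ge (\tfrac14 + \tfrac{\la}{2} - \es')\, x^\al$.

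The main obstacle is the medium-prime bound above. This is where Ramachandra's technique must be substantively generalized; the technical constraint $1/33 < \la < 1/29$ is precisely the regime where the Brun--Titchmarsh/pointwise trade-off, combined with available bounds on $y$-smooth integers in short intervals, produces the constant $1/4$. The remaining steps are routine once this estimate is in hand.
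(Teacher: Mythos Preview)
Your overall architecture matches the paper's: establish the identity
\[
\sum_{p>x^{\al}}(\log p)\,M_p=(1-\al)\,x^{\al}\log x+O(x^{\al}),
\]
bound the medium-range contribution $x^{\al}<p\le x^{1-\al}$ from above by $(\tfrac14+o(1))x^{\al}\log x$, and subtract. Your Chebyshev-identity derivation of the main term is a legitimate alternative to the paper's citation of \cite[Lemma~1]{rama}, and your reinterpretation of $S$ as a count of integers with a prime factor $>x^{1-\al}$ is correct.

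The genuine gap is the medium-range bound itself, which you explicitly flag as ``the main obstacle'' but do not prove. The ingredients you list---the trivial inequalities $M_p\le x^{\al}/p+1$ and $M_p\le 1$, together with the fact that each $m$ has at most two medium prime factors (from $3\al>1$)---are insufficient. Using $M_p\le 1$ gives $\sum_{x^{\al}<p\le x^{1-\al}}(\log p)M_p\le\theta(x^{1-\al})\asymp x^{1-\al}$, far too large; using $M_p\le x^{\al}/p+1$ reintroduces the same $\theta(x^{1-\al})$ term. And bounding each $m$'s medium contribution by $\log m$ gives only $\sum\le x^{\al}\log x$, which after subtraction yields a negative lower bound for $S$. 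None of these routes produces the constant $\tfrac14$.

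What the paper actually does is work in the dual variable: for $n\in[x^{\al},x^{1-\al}]$ it bounds $\pi\big(\tfrac{x+x^{\al}}{n}\big)-\pi\big(\tfrac{x}{n}\big)$ via Selberg's upper-bound sieve (Lemma~\ref{rlem3}) with level $z=x^{\del}$, and controls the resulting error terms $R_d$ using van der Corput's exponential-sum estimate (Lemma~\ref{rlem2}). Integrating the sieve bound over $t\in[\al,1-\al]$ with weight $(1-t)$ gives the upper bound $\tfrac{1}{\del}(1-2\al)x^{\al}\log x$; choosing $\del=4\la$ (so that $1-2\al=\la$ yields exactly $\tfrac14$) is admissible precisely when $3\al-\tfrac43<4\la<\tfrac{5\al-2}{3}$, which is equivalent to $\tfrac{1}{33}<\la<\tfrac{1}{29}$. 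Thus the numerical window on $\la$ comes from the sieve/van der Corput trade-off, not from your condition $3\al>1$ (which holds for all $\la<\tfrac13$). Your proposal would need to supply this analysis---or a genuine substitute---to be complete.
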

We postpone the proof of Lemma \ref{delx} to Section 4.

\section{Proof of Theorems \ref{gnbd} and \ref{grim}}

\noindent
{\bf Proof of Theorem \ref{gnbd}:}
$(i)$ Let $\al$ be given by Lemma \ref{eta}. We apply Lemma \ref{key} by taking $x=n, z=y=n^\al$. Since
$\pi(y)=\pi(n^\al)<2\frac{n^\al}{\al \log n}<c_1n^\al$ for sufficiently large $n$, the assertion follows
from Lemma \ref{key} and Lemma \ref{eta}. As remarked after Lemma \ref{eta}, a permissible
value of $\al$ is given by $0.45<\al<0.46$.

\noindent
$(ii)$ Let $\es >0$ be given. By $(i)$, we may assume that $\es <\frac{1}{2}$. Since
$\pi(n^\es)<2\frac{n^\es}{\es \log n}<c_0 n^\es$ for sufficiently large $n$
where $c_0$ is given by Lemma \ref{smooth}, the assertion now follows from
Lemma \ref{key} by taking $x=n, z=y=n^\es$ and Lemma \ref{smooth}.
$\hfill \Box$

\vspace{1cm}

\noindent
{\bf Proof of Theorem \ref{grim}:}
Let $\es>0$ be given. We apply Lemma \ref{key} by taking $x=n, z=y=n^\es$. Since
$\pi(y)=\pi(n^\es)<2\frac{n^\es}{\es \log n}\ll n^\es$ for sufficiently large $n$, the assertion follows
from Lemma \ref{key} and Conjecture \ref{conj1}. $\hfill \Box$

\section{Proof of Lemma \ref{delx}}

We follow the proof of Ramachandra in \cite{rama} and fill in the
details as we go along. Let $\al <\frac{1}{2}$ and $0< \be <\frac{1}{2}$.
By taking
$\epsilon=x^{\al - 1}$ in \cite[Lemma 1]{rama}, we obtain
\begin{align}\label{rl1}
\sum_{n\le x^{1-\al}}\left\{\pi(\frac{x+x^\al}{n})-\pi(\frac{x}{n})\right\}\log \frac{x}{n}=(1-\al)x^\al \log x
+O(x^{\al}).
\end{align}
We divide the interval $[\be , 1-\al]$ as $0<\be=\be_0<\be_1<\ldots <\be_m=1-\al$ for some $m$. For $0< r < s < 1$, let
\begin{align}
S(r, s)=\sum_{x^r \le n\le x^s}\left\{\pi(\frac{x+x^\al}{n})-\pi(\frac{x}{n})\right\}\log \frac{x}{n}.
\end{align}
We would like to get an upper bound for $S(\be, 1-\al)=\sum^{m-1}_{i=0}S(\be_i, \be_{i+1})$.
We first prove the following lemma which is minor refinement of \cite[Lemma 3]{rama}.

\begin{lemma}\label{rlem3}
Let $x\ge 1$ and $1\le R\le S\le x^{1-\al}$. For an integer $d\ge 1$, let
\begin{align}
R_d=\sum_{R \le n\le S}\left\{\big{[}\frac{x+x^\al}{nd}\big{]}-\big{[}\frac{x}{nd}\big{]}\right\}.
\end{align}
Then
\begin{align}\label{rl3}
\begin{split}
\sum_{R \le n\le S}\left\{\pi(\frac{x+x^\al}{n})-\pi(\frac{x}{n})\right\}
&\le \frac{(2-\es ) x^\al}{\log z}\log (\frac{S}{R}+2)\left(1+O(\frac{1}{R}+\frac{1}{\log z})\right)\\
&+O(z\max_{d\le z} |R_d|)
\end{split}
\end{align}
where $z\ge 3$ is an arbitrary real number and $\es >0$ is arbitrary small.
\end{lemma}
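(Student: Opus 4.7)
The plan is to apply Selberg's upper bound sieve to each interval $I_n := (x/n,\,(x+x^\al)/n]$ for $R\le n\le S$ and then sum over $n$. Take $\sqrt{z}$ as the sifting parameter, and let $\la_d$ be real weights supported on squarefree integers $d\le\sqrt{z}$ with $\la_1=1$. Since every prime in $I_n$ exceeding $\sqrt{z}$ is coprime to $P(\sqrt{z}):=\prod_{p\le\sqrt{z}}p$, Selberg's inequality yields, for each $n$,
\begin{align*}
\pi\left(\tfrac{x+x^\al}{n}\right)-\pi\left(\tfrac{x}{n}\right) \le \sum_{d_1,d_2\le\sqrt{z}}\la_{d_1}\la_{d_2}\left(\left\lfloor\tfrac{x+x^\al}{n[d_1,d_2]}\right\rfloor-\left\lfloor\tfrac{x}{n[d_1,d_2]}\right\rfloor\right)+\pi(\sqrt{z}).
\end{align*}
Summing over $R\le n\le S$ converts the right-hand side into $\sum_{d_1,d_2\le\sqrt{z}}\la_{d_1}\la_{d_2}R_{[d_1,d_2]}$ plus an $O(\sqrt{z}\,S/\log z)$ contribution from primes $\le\sqrt{z}$, which will be absorbed into the final error.

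Next I would extract the main term of $R_d$ by using $\lfloor(x+x^\al)/(nd)\rfloor-\lfloor x/(nd)\rfloor = x^\al/(nd)+O(1)$ together with $\sum_{R\le n\le S}1/n = \log(S/R)+O(1/R)$, writing $R_d = (x^\al/d)\log(S/R+2)(1+O(1/R))+\mathcal E_d$. The principal contribution then becomes
\begin{align*}
x^\al\log\left(\tfrac{S}{R}+2\right)(1+O(1/R))\sum_{d_1,d_2\le\sqrt{z}}\frac{\la_{d_1}\la_{d_2}}{[d_1,d_2]},
\end{align*}
and the classical Selberg optimization of this quadratic form under $\la_1=1$ and support $d\le\sqrt{z}$ minimizes the inner sum at $1/L(\sqrt{z})$, where $L(\sqrt{z})=\sum_{d\le\sqrt{z},\,\mu^2(d)=1}\prod_{p\mid d}(p-1)^{-1}$. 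The Mertens-type bound $L(\sqrt{z})\ge(1-\es')\log\sqrt{z}=(1-\es')(\log z)/2$ then produces the claimed factor $(2-\es)/\log z$, along with the $(1+O(1/R+1/\log z))$ correction.

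For the error from the sieve weights, the optimal $\la_d$ satisfy $|\la_d|\le 1$, and as $(d_1,d_2)$ ranges over pairs with $d_1,d_2\le\sqrt{z}$ there are at most $z$ ordered pairs (with $[d_1,d_2]\le z$), so
\begin{align*}
\Big|\sum_{d_1,d_2\le\sqrt{z}}\la_{d_1}\la_{d_2}\mathcal E_{[d_1,d_2]}\Big|\ll z\max_{d\le z}|R_d|.
\end{align*}
Combined with the negligible prime-counting contribution, this yields the stated error $O(z\max_{d\le z}|R_d|)$. The main obstacle is pinning down the constant $2-\es$: this requires a sharp lower bound of the form $L(\sqrt{z})\ge\log\sqrt{z}+O(1)$ (standard from $\sum_{d\le y}\mu^2(d)/\phi(d)=\log y+O(1)$) together with careful bookkeeping of all the $(1+O(1/R+1/\log z))$ factors through the sieve estimate; a minor subtlety is the boundary refinement $\log(S/R+2)$ in place of $\log(S/R)$, which keeps the bound meaningful when $R$ and $S$ are close together.
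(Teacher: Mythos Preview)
Your approach is correct and is essentially the paper's: both apply Selberg's upper-bound sieve with sifting level $\sqrt{z}$ to the set $T=\bigcup_{R\le n\le S}\big((x/n,(x+x^\al)/n]\cap\Z\big)$, use that the number of multiples of $d$ in $T$ equals $(x^\al/d)\sum_{R\le n\le S}1/n$ plus the $R_d$-controlled remainder, and then invoke the standard minimization of $\sum\la_{d_1}\la_{d_2}/[d_1,d_2]$ together with $L(\sqrt{z})\ge\tfrac12\log z+O(1)$; the paper is in fact terser than you are, simply pointing to Ramachandra for the sieve details. One small correction: the $\pi(\sqrt{z})$ contribution you add is not merely ``absorbed'' into the error---it is actually zero, since $n\le S\le x^{1-\al}$ forces every integer in $I_n$ to exceed $x/n\ge x^\al\ge\sqrt{z}$ in the useful range of $z$, and this is needed because $S\sqrt{z}/\log z$ would \emph{not} in general be dominated by $z\max_{d\le z}|R_d|$ in the subsequent application.
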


\begin{proof}
Let $$T=\underset{R\le n\le S}{\cup}\left((\frac{x}{n}, \frac{x+x^\al}{n}]\cap \Z\right).$$
 From $T$, we
remove those which are divisible by primes $\le \sqrt{z}$ and let $T_1$ be the remaining set. We note that
for each $d$, the number of integers in $T$ divisible by $d$ is
\begin{align*}
\frac{x^\al}{d}\sum_{R\le n\le S} \frac{1}{n}+R_d
\end{align*}
Using Selberg's sieve as in \cite{rama}, we obtain the assertion of lemma.
\end{proof}

Let $\phi(u)=u-[u]-\frac{1}{2}$. Then we can write
$$\big{[}\frac{x+x^\al}{nd}\big{]}-\big{[}\frac{x}{nd}\big{]}=
\frac{x^{\alpha}}{nd}
-\phi(\frac{x+x^\al}{nd})+\phi(\frac{x}{nd}).$$
The following result is a restatement of \cite[Lemma 2]{rama} which follows from
a result of van der Corput (see \cite{rama}).

\begin{lemma}\label{rlem2}
Let $u\ge 1, V, V_1$ be real numbers satisfying $3\le V<V_1\le 2V, V_1\ge V+1$ and $u\le \eta \le 2u$. Then
\begin{align}
\sum_{V \le n\le V_1} \phi (\frac{\eta}{n})=O(V^{\frac{1}{2}}\log V+V^{\frac{3}{2}}u^{-\frac{1}{2}}+u^{\frac{1}{3}}).
\end{align}

\end{lemma}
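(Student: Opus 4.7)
The plan is to reduce the fractional-part sum to an exponential sum and then apply van der Corput's second derivative test, following the classical van der Corput / Vinogradov recipe that Ramachandra cites. Concretely, I would start from a truncated Fourier expansion of the sawtooth function. Using the Vaaler (or Erd\H{o}s--Tur\'an) approximation, for any integer parameter $H\ge 2$ one has
\begin{align*}
\phi(t) \;=\; \sum_{1\le |h|\le H} c_h\, e^{2\pi i h t} \;+\; E_H(t),
\end{align*}
with $c_h \ll 1/|h|$ and an error $E_H(t) \ll \min(1, 1/(H\|t\|))$, where $\|t\|$ is distance to the nearest integer. Plugging in $t=\eta/n$ and interchanging summations yields
\begin{align*}
\sum_{V\le n\le V_1}\phi(\eta/n) \;\ll\; \sum_{1\le h\le H}\frac{1}{h}\Bigl|\sum_{V\le n\le V_1}e^{2\pi i h\eta/n}\Bigr| \;+\; \sum_{V\le n\le V_1}\min\!\Bigl(1,\tfrac{1}{H\|\eta/n\|}\Bigr).
\end{align*}
The tail sum is controlled by the standard fact that the values $\eta/n$ are well spaced as $n$ ranges over a dyadic block, giving a contribution $O(V/H + \log V)$.

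For the main term, fix $h$ and set $f(t) = h\eta/t$ on $[V,V_1]$. Then $f''(t) = 2h\eta/t^3$, so on the dyadic interval $V\le t\le V_1\le 2V$ we have $|f''(t)|\asymp h\eta/V^3$. Van der Corput's second derivative test (which is the content of the result cited from van der Corput via Ramachandra) gives
\begin{align*}
\sum_{V\le n\le V_1} e^{2\pi i f(n)} \;\ll\; V\cdot\Bigl(\tfrac{h\eta}{V^3}\Bigr)^{1/2} + \Bigl(\tfrac{h\eta}{V^3}\Bigr)^{-1/2} \;=\; \Bigl(\tfrac{h\eta}{V}\Bigr)^{1/2} + \tfrac{V^{3/2}}{(h\eta)^{1/2}},
\end{align*}
valid in the range where $h\eta/V^3$ is bounded away from zero; for the small values $h \ll V/\eta$ where the second derivative test is vacuous, I would bound trivially (or by the first derivative / Kusmin--Landau estimate) and absorb the contribution into the $V^{1/2}\log V$ term. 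Weighting by $1/h$ and summing, $\sum_{h\le H} h^{-1/2}\ll H^{1/2}$ and $\sum_{h\le H} h^{-3/2}\ll 1$, one obtains a main-term contribution of order
\begin{align*}
\Bigl(\tfrac{\eta}{V}\Bigr)^{1/2}H^{1/2} + V^{3/2}\eta^{-1/2}.
\end{align*}

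Combining with the Vaaler tail $V/H$ and the unavoidable $O(\log V)$ from the sawtooth tail, balancing $V/H$ against $(\eta H/V)^{1/2}$ suggests $H \asymp V\eta^{-1/3}$, which turns both of those quantities into $\eta^{1/3}\asymp u^{1/3}$. The remaining terms are $V^{3/2}u^{-1/2}$ (coming directly from the second van der Corput term) and $V^{1/2}\log V$ (coming from logarithmic factors in the tail sum and the small-$h$ regime). Putting the three pieces together gives precisely the stated bound $O(V^{1/2}\log V + V^{3/2}u^{-1/2} + u^{1/3})$.

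The main obstacle, as usual in this kind of estimate, is the bookkeeping for the small-$h$ regime where van der Corput's second derivative test degenerates, together with making sure the $\log$ losses from the Fourier tail do not exceed $V^{1/2}\log V$. These are technical but routine; the heart of the argument is the single application of van der Corput's $AB$-process to the phase $f(t) = h\eta/t$, and once that input is in place the optimization of $H$ produces the asserted three-term bound. Since the author explicitly defers to the proof in Ramachandra's paper, I would not reproduce the detailed constants but only confirm that the parameter ranges $3\le V<V_1\le 2V$, $V_1\ge V+1$ and $u\le\eta\le 2u$ are exactly what is needed for the dyadic control $|f''|\asymp h\eta/V^3$ and for the spacing of $\eta/n$.
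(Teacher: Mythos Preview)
Your sketch is correct and is precisely the van der Corput argument that the paper invokes: the authors do not give their own proof of this lemma but simply cite it as a restatement of \cite[Lemma~2]{rama}, which in turn is proved exactly by truncating the Fourier expansion of $\phi$ and applying van der Corput's second derivative test to the phase $h\eta/t$. Your choice of $H\asymp V\eta^{-1/3}$ and the resulting three terms $V^{1/2}\log V$, $V^{3/2}u^{-1/2}$, $u^{1/3}$ match Ramachandra's computation, so there is nothing to add.
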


To get an upper bound for $S(\be_i, \be_{i+1})$, we take $R=x^{\be_i}, S=x^{\be_{i+1}}$ in Lemma \ref{rlem3}. Recall that
$\be_{i+1}\le 1-\al$. We subdivide $(R, S]$ into intervals of type $(V, 2V]$ and at most one interval of type
$(V, V_1]$ with $V_1\le 2V$. We apply Lemma \ref{rlem2} twice by taking
$\eta=\frac{x}{d}$ and $\eta=\frac{x+x^\al}{d}$ to get
\begin{align*}
R_d&=O\left(\left(x^{\frac{1}{2}\be_{i+1}}+x^{\frac{1}{2}(3\be_{i+1}-1)}d^{\frac{1}{2}}+(\frac{x}{d})^{\frac{1}{3}}\right)
(\log x)^2\right)\\
&=O\left(\left(x^{1-\frac{3}{2}\al}d^{\frac{1}{2}}+x^{\frac{1}{3}}\right)(\log x)^2\right),
\end{align*}
since $\be_{i+1}\leq 1-\al$
Let $3\al-\frac{4}{3}<\del<\frac{5\al-2}{3}$ and take $z=x^\del$. Then
\begin{align*}
z\max_{d\le z}|R_d|=O(x^{1-\frac{3}{2}\al+\frac{3}{2}\del}(\log x)^2)
\end{align*}
and $1-\frac{3}{2}\al+\frac{3}{2}\del<\al$. From \eqref{rl3}, we obtain
\begin{align*}
\sum_{x^{\be_i} \le n\le x^{\be_{i+1}}}\left\{\pi(\frac{x+x^\al}{n})-\pi(\frac{x}{n})\right\}
\le \frac{2x^\al}{\del}(\be_{i+1}-\be_i).
\end{align*}
Therefore an upper bound for
\begin{align*}
\sum_{x^\be \le n\le x^{1-\al}}\left\{\pi(\frac{x+x^\al}{n})-\pi(\frac{x}{n})\right\}\log \frac{x}{n}
\end{align*}
is
\begin{align*}
&\frac{2x^\al \log x}{\del}\times \\
&\{(\be_1-\be_0)(1-\be_0)+(\be_2-\be_1)(1-\be_1)+\cdots +(\be_m-\be_{m-1})(1-\be_{m-1})\}.
\end{align*}
We take $\be_i$'s to be equally spaced and take $m$ sufficiently large. Since
\begin{align*}
\frac{2x^\al \log x}{\del}\int ^{1-\al}_{\be} (1-t)dt = \frac{x^\al\log x}{\del}(1-\al ^2-\be(2-\be)),
\end{align*}
we obtain with \eqref{rl1} that
\begin{align}\label{deles}
\sum_{n\le x^{\be }}\left\{\pi(\frac{x+x^\al}{n})-\pi(\frac{x}{n})\right\} \ge
(1-\al-\es' -\frac{1-\al ^2-\be(2-\be)}{\del})x^{\al}.
\end{align}
where $1-\frac{3}{2}\al+\frac{3}{2}\del<\al$ and $\es'>0$ is arbitrary small.

Let $\frac{1}{33}<\la <\frac{1}{29}$ and we put $\al=\be=\frac{1-\la}{2}$ and $\del=4\la$. Then
$1-\frac{3}{2}\al+\frac{3}{2}\del<\al$ and hence we obtain \eqref{r2.5} from \eqref{deles}.
$\hfill \Box$

\section{Proof of Theorem \ref{g1bdlem} and Theorem \ref{g1bd}}

We begin with the proof of Theorem \ref{g1bdlem}.

\begin{proof}
Recall that $g_1(n)$ is the largest integer $k$ such that
$$\omega(\prod_{i=1}^l (n+i)) \geq l $$
for $ 1\leq l \leq k$.  Suppose that $g_1(n) > n^\gamma$.  Then
$g_1(n) > n^\alpha$.  Let $k = [n^\alpha]$.  Then
$$\omega(P) \geq k, \quad P = \prod_{i=1}^k (n+i). $$
By (\ref{pixal}),
$$\sum_{j\leq k} \pi\left( {n+k \over j}\right) - \pi\left({n \over j}\right)
 \geq \delta k. $$
Now the intervals $[n, n+k]$, $[n/2, (n+k)/2]$, ... are disjoint intervals.
In fact, if we write $I_j = [n/j, (n+k)/j] = [a_j, b_j]$(say),
then it is easily seen $b_1 > a_1 > b_2 > a_2 > b_3 > a_3 \cdots $
by virtue of the condition that $k< n^\alpha$ with $\alpha < 1/2$.
A prime $q_i$ (say) lying in the interval $I_j$ satisfies
 $n < jq_i < n+k$ and consequently is a prime dividing $P$.
Since these primes $q_i$ are all distinct, and all of these
primes are greater than $n/k \geq n^{1-\alpha}$, we deduce that
there are at least $\delta k$ distinct primes greater than $n^{1-\alpha}$
dividing $P$. Let $\del'\geq \del$ be such that $\del'k=\lceil \del k\rceil$.
Since $\om(P)\geq k$, there are at least $(1-\del')k$ other primes dividing
$P$ and $(1-\del')k\in \Z$. Also $k!|P$ since $P$ is a product of $k$ consecutive
numbers. All the prime factors of $k!$ are less than or equal to
$k<n^\alpha<n^{1-\alpha}$ since $\alpha < 1/2$. Hence we get
\begin{align*}
P \geq k! \left(\prod_{k<p<p_{(1-\del')k}}p\right) (n^{1-\alpha})^{\delta' k}.
\end{align*}
Now we apply the bounds provided by Lemma \ref{pix}.
By Lemma \ref{pix} $(iii)$ and $(iv)$, we obtain
\begin{align*}
\log \left(\prod_{k<p\le p_{(1-\del')k}}p\right)=&\theta(p_{(1-\del')k})-\theta(k)\\
\ge &(1-\del')k\log (1-\del')k +(1-\del')k\{\log_2(1-\del')k -c_2\}\\
& -1.00008k\\
>&(1-\del')k\log (1-\del')k +k(c_3\log_2c_3k-c_4)
\end{align*}
where $c_3, c_4$ are positive constants. This together with $k!>(\frac{k}{e})^k$ by
Lemma \ref{pix} $(v)$ and $P<(2n)^k$ imply
\begin{align*}
2n&>\frac{k}{e}(1-\del')^{1-\del'} k^{1-\del'}c_5(\log c_3k)^{c_3}n^{\del'(1-\al)}\\
&=\frac{1}{e}(1-\del')^{1-\del'} c_5(\log c_3k)^{c_3}(\frac{k}{n^\gamma})^{2-\del'}n^{\ga (2-\del')}n^{\del'(1-\al)}\\
&>2n^{\ga (2-\del')+\del'(1-\al)}\ge 2n^{\ga (2-\del)+\del(1-\al)}2n
\end{align*}
for large $n$ since $\del'>\del$ and $1-\al>\frac{1}{2}>\ga$. This is a contradiction. Thus
$g_1(n)<k\le n^\al\le n^\ga$.
\end{proof}

\noindent
{\bf Proof of Theorem \ref{g1bd}:} From Lemma \ref{delx}, we obtain \eqref{pixal} with
$\al=\frac{1-\al}{2}$ and $\del=\frac{1}{4}+\frac{\la}{2}-es'$ for some
$\frac{1}{33}<\la<\frac{1}{29}$. Now the assertion follows from \eqref{g1bdlem}.
Taking $\la=\frac{1}{30}+2\es'$ for instance, we get
$\ga\le \frac{1}{2}-\frac{1}{390}$.
$\hfill \Box$

\vspace{.75cm}
\noindent
{\bf Remark:} It is possible to improve the result we have obtained. However the improvement is
not substantial. Indeed the result of van der Corput has been improved and using methods of
Harman and Baker \cite{HarBak}, it is possible to obtain a small refinement. The details are rather
technical and will be discussed in a future paper by the junior author.

\section*{Acknowledgments}
We thank the referee for helpful remarks and careful reading of our paper. We would also
like to thank Sanoli Gun and Purusottam Rath for their careful reading and corrections
on an earlier version of this paper.

\end{document}